\theoremstyle{plain}
\newtheorem{thm}{Theorem}
\newtheorem{lem}[thm]{Lemma}
\theoremstyle{definition}
\theoremstyle{remark}
\DeclareMathOperator{\p}{\mathbb{P}}
\title{On the Last New Vertex Visited by a Random Walk in a Directed Graph}
\author{Calum~Buchanan \and Paul~Horn \and Puck~Rombach}
\date{\today}
\begin{document}
\maketitle

\begin{abstract}
Consider a simple graph in which a random walk begins at a given vertex. It moves at each step with equal probability to any neighbor of its current vertex, and ends when it has visited every vertex. We call such a random walk a \emph{random cover tour}. It is well known that cycles and complete graphs have the property that a random cover tour starting at any vertex is equally likely to end at any other vertex. Ronald Graham asked whether there are any other graphs with this property. In 1993, L\'aszlo Lov\'asz and Peter Winkler showed that cycles and complete graphs are the only undirected graphs with this property. We strengthen this result by showing that cycles and complete graphs (with all edges considered bidirected) are the only directed graphs with this property.

\smallskip

\noindent
{\bf Keywords:} random walks, directed graphs, cover tours

\smallskip

\noindent
{\bf 2020 Mathematics Subject Classification:} 05C81, 05C20
\end{abstract}

\medskip

Let $G$ be a connected digraph. A {\em cover tour} of $G$ from a vertex $u$ is a directed walk which begins at $u$ and ends when it has visited every vertex of $G$.
Lov\'asz and Winkler showed that cycles and complete graphs are the only undirected graphs with the property that a random cover tour beginning at any given vertex is equally likely to end at any other vertex~\cite{Lovasz1993}. We address a question posed by Winkler of whether there are any further directed graphs with this property.

We will borrow the notation used in~\cite{Lovasz1993}; let $L(u,v)$ be the event that $v$ is the last vertex to visited by a random cover tour beginning at $u$. In the case of a bidirected cycle, one might be tempted to believe that a neighbor of the starting vertex should be less likely to be last visited than one further from the start. While this intuition does not hold for cycles, it does hold for many other connected graphs. It is also shown in~\cite{Lovasz1993} that, for any two nonadjacent vertices $u$ and $v$ of a bidirected graph $G$, there is a neighbor $x$ of $u$ such that $\p(L(x,v)) \leq \p(L(u,v))$, and this inequality is strict if the induced subgraph $G - \{u,v\}$ is connected. From the proof, it follows that non-neighbors of a given vertex $u$ are at least as likely as neighbors to be last visited on a random walk from $u$. We generalize this notion in the following lemma (the proof is nearly identical to that of Theorem~2 in~\cite{Lovasz1993}).

\begin{lem}\label{lem:outneighbors}
Let $G$ be a connected digraph, and let $u$ be a vertex in $G$. If $u$ does not have an edge to a vertex $v$, then there is an out-neighbor $x$ of $u$ such that $\p(L(x,v)) \leq \p(L(u,v))$. Furthermore, this inequality is strict if there is a
directed walk from $u$ to $v$ which visits all vertices in $G$, but does not revisit $u$.
\end{lem}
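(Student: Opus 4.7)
The plan is to condition the random cover tour from $u$ on its first step. Since $u$ has no edge to $v$, this first step lands, uniformly at random, in some out-neighbor $x \in N^+(u)$ with $x \neq v$. Writing $W$ for the random walk continuing from $x$, I would begin by expanding
\[
\p(L(u,v)) \;=\; \frac{1}{|N^+(u)|} \sum_{x \in N^+(u)} \p\bigl(L(u,v) \mid w_1 = x\bigr),
\]
and then interpret each conditional probability on the right as the probability, under $W$, that $v$ is the last vertex in $V(G)\setminus\{u,x\}$ to be first-visited --- the vertices $u$ and $x$ are already accounted for at times $0$ and $1$ of the cover tour from $u$.

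Next I would compare this against $\p(L(x,v))$, which is the probability under the same walk $W$ from $x$ that $v$ is last to be first-visited among the strictly larger set $V(G)\setminus\{x\}$. Any realization of $W$ witnessing $L(x,v)$ automatically witnesses the conditional event, so $\p(L(x,v)) \leq \p(L(u,v) \mid w_1 = x)$ for every $x \in N^+(u)$. Averaging this over $x$ and comparing with the displayed identity shows that the minimum of $\p(L(x,v))$ over $x \in N^+(u)$ is at most $\p(L(u,v))$, which yields the first assertion.

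For the strict case, I would use the hypothesized directed walk $u = p_0, p_1, \dots, p_k = v$ through every vertex that does not revisit $u$. Setting $x := p_1 \in N^+(u)$, the segment $p_1, p_2, \dots, p_k$ is a realization of the first $k-1$ steps of $W$ in which $v$ is first-visited exactly at step $k-1$, every vertex of $V(G)\setminus\{u,x\}$ has been visited by then, and $u$ is never visited. This cylinder has positive probability. Every extension of it to an infinite walk lies in the conditional event $\{L(u,v) \mid w_1 = x\}$ but is disjoint from $L(x,v)$, because along any such extension $v$ is already first-visited while $u$ is not, and $u$ must be first-visited at some later step; thus $u$ (or a vertex first-visited after $u$) will be last in $V(G)\setminus\{x\}$. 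So $\p(L(x,v)) < \p(L(u,v) \mid w_1 = x)$ strictly for this particular $x$, and the averaging argument now yields some out-neighbor $x$ with $\p(L(x,v)) < \p(L(u,v))$.

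I expect the main obstacle to be bookkeeping rather than any substantive probabilistic difficulty: one must carefully keep straight which vertices are considered ``already visited'' under each event and phrase both the event-inclusion and the positive-probability cylinder argument in terms of the same walk $W$ from $x$, so that $\p(L(x,v))$ and $\p(L(u,v) \mid w_1 = x)$ are legitimately compared on a single sample space. A minor point to verify is that strong connectivity of $G$ (implicit in ``connected digraph'' here, since cover tours must terminate) guarantees the continuation of $W$ eventually visits $u$, which is what makes $L(x,v)$ fail on the cylinder.
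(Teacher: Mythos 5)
Your proposal is correct and takes essentially the same route as the paper: condition on the first step, observe that $L(x,v)$ is contained in the conditional event (the paper phrases this as an exact disjoint decomposition with the complementary event $L(x;v,u)$ in equation~\eqref{eq:mean}), average over out-neighbors, and obtain strictness by exhibiting a positive-probability trajectory, extracted from the hypothesized cover tour, that realizes the gap event. The only difference is presentational, not substantive.
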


\begin{proof}
Let $u$ and $v$ be vertices of a connected digraph $G$ such that $uv \not\in E(G)$. Let $x_1, \ldots, x_d$ denote the out-neighbors of $u$, and let $L(x_i;v,u)$ denote the event that $u$ is the last vertex, and $v$ the next-to-last vertex, in a random cover tour of $G$ beginning at $x_i$. It is not hard to see that the event $L(u,v)$ is the disjoint union of the events $L(x_i,v)$ and $L(x_i;v,u)$; either the random cover tour from $u$ to $v$ visits $u$ more than once, or it does not. Thus,
\begin{equation}\label{eq:mean}
	\p(L(u,v)) = \sum_{i=1}^d \frac{1}{d} \left(\p(L(x_i,v)) + \p(L(x_i;v,u))\right).
\end{equation}
It follows that $\p(L(u,v))$ is at least the mean of the $\p(L(x_i,v))$ over all out-neighbors $x_i$ of $u$.

The second statement also follows from equation~\eqref{eq:mean}. If it is possible for a cover tour to start at $u$ and end at $v$ without revisiting $u$, then $\p(L(x_i;v,u)) > 0$ for some $i \in \{1, \ldots, d\}$. In this case, $\p(L(u,v))$ is strictly larger than the mean of the $\p(L(x_i,v))$.
\end{proof}

In particular, if $T$ is a cover tour from $u$ to $v$ in a digraph $G$ with the property that a random cover tour from any given vertex is equally likely to end at any other vertex, then either $uv \in E(G)$ or $u$ appears at least twice in $T$.

\begin{thm}\label{thm:main}
In any directed graph with the property that $\p(L(u,v)) = \p(L(u,w))$ for any three distinct vertices $u$, $v$, and $w$, every edge is bidirected.
\end{thm}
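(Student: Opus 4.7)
The plan is to argue by contradiction: assume $G$ satisfies the theorem's hypothesis (so $\p(L(u,v)) = 1/(n-1)$ for every pair of distinct vertices $u,v$) but contains an edge $uv \in E(G)$ with $vu \notin E(G)$. By the remark following Lemma~\ref{lem:outneighbors}, applied to the pair $(v,u)$, every cover tour from $v$ to $u$ revisits $v$. The strategy is to exhibit one that does \emph{not}, producing the contradiction.

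The first observation is that $G$ is strongly connected: since $\p(L(w, w')) = 1/(n-1) > 0$ for every pair of distinct vertices, each vertex is reachable from every other by a walk of positive probability. The crucial structural step is to show that $G - w$ is strongly connected for \emph{every} vertex $w$. If not, then $V \setminus \{w\}$ admits a partition $A \sqcup B$ into nonempty parts with no edge of $G - w$ from $B$ to $A$. For any $b \in B$, a cover tour from $b$ must visit $A$, but every walk in $G$ from a vertex of $B$ to a vertex of $A$ must pass through $w$, so $w$ is visited before the first vertex of $A$. Hence $w$ is never the last new vertex of a cover tour from $b$, giving $\p(L(b, w)) = 0$ and contradicting uniformity.

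Applying this to $v$, the subdigraph $G - v$ is strongly connected. Fix any out-neighbor $y$ of $v$; since $vu \notin E(G)$, we have $y \in V \setminus \{u, v\}$, and similarly every in-neighbor of $u$ lies in $V \setminus \{u, v\}$ (because $v$ is not an in-neighbor of $u$). To finish the proof it suffices to exhibit, in the subdigraph $G - \{u, v\}$, a walk from $y$ that visits every vertex of $V \setminus \{u, v\}$ and ends at some in-neighbor $z$ of $u$: prepending the edge $v \to y$ and appending $z \to u$ then yields a cover tour of $G$ from $v$ to $u$ that visits every vertex, does not revisit $v$, and reaches $u$ only at the final step.

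The main obstacle is producing this covering walk in $G - \{u, v\}$. When $G - \{u, v\}$ is itself strongly connected, the walk exists by routine path-concatenation from $y$ to each vertex of $V \setminus \{u,v\}$, finishing at a chosen in-neighbor $z$ of $u$. The harder case — where I expect the bulk of the technical work to lie — is when $G - \{u, v\}$ is not strongly connected. Here I would refine the partition argument of the second paragraph within $G - \{u, v\}$: if $V \setminus \{u, v\} = A' \sqcup B'$ with no edges from $B'$ to $A'$ in $G - \{u, v\}$, then strong connectivity of $G - v$ and $G - u$ (both already established) forces $u$ and $v$ to have out-neighbors in every source-SCC of $G - \{u, v\}$, and a careful analysis of cover tours from a well-chosen starting vertex shows some vertex of the sink block has a forced first-visit time that rules it out from being the last new vertex, again giving a zero-probability contradiction with uniformity. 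Once this refined case analysis is in hand, the required walk in $G - \{u, v\}$ exists and the desired cover tour from $v$ to $u$ without revisiting $v$ is produced.
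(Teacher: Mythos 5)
Your opening moves are sound: the zero-probability argument showing that $G-w$ is strongly connected for every $w$ (a sink/source partition of $G-w$ would force $w$ to be seen before some block, so $\p(L(b,w))=0$ for a suitable start $b$) is correct, and so is the reduction of the problem to exhibiting a walk inside $G-\{u,v\}$ from an out-neighbor of $v$ covering $V\setminus\{u,v\}$ and ending at an in-neighbor of $u$; such a walk, framed by the edges $v\to y$ and $z\to u$, is exactly a cover tour from $v$ to $u$ not revisiting $v$, contradicting Lemma~\ref{lem:outneighbors}. The easy case where $G-\{u,v\}$ is strongly connected is fine. But the case you defer --- $G-\{u,v\}$ not strongly connected --- is not a technical loose end; it is the entire content of the theorem, and your sketch for it does not work as stated. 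The single-vertex argument succeeds because $w$ is the unique gateway between the two sides, so its first visit is forced before one side; once two gateways $u$ and $v$ are available, a cover tour can always route around through the other one, and no vertex's first-visit order is forced. Concretely, positivity of the probabilities (which is all your sketch invokes, beyond the already-established strong connectivity of $G$, $G-u$, $G-v$) cannot suffice: take $G-\{u,v\}$ to be two disjoint bidirected cliques $P$ and $Q$, join both $u$ and $v$ bidirectedly to all of $P\cup Q$, and include $uv$ but not $vu$. Then every $\p(L(x,y))$ is positive, every $G-w$ is strongly connected, yet $G-\{u,v\}$ is disconnected and the covering walk you need does not exist. Such a graph of course violates uniformity, but nothing in your outline detects this; you would have to use the full strength of the hypothesis (equality of probabilities, not just their positivity), and that is precisely the missing idea.

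For comparison, the paper never tries to certify $G-\{u,v\}$ strongly connected or to build the $v$-to-$u$ tour directly. It takes a shortest cover tour $T$ from the head to the tail of the one-way edge (which must revisit its start, by the remark after Lemma~\ref{lem:outneighbors}), closes it up with that edge, and repeatedly applies Lemma~\ref{lem:outneighbors} to the last new vertices of shifted tours along the closed walk; this forces a reversed cycle $W$ through the once-visited vertices, and a delicate interval surgery (the $A$, $B$, $C$ argument) shows those reversed edges must actually lie on $T$, contradicting minimality together with the forced second visit to $u$. Some argument of comparable substance is what your ``careful analysis of cover tours from a well-chosen starting vertex'' would have to be replaced by; as written, the proposal has a genuine gap at exactly that point.
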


\begin{proof}
Suppose, for the sake of contradiction, that there exists a pair of vertices $u$ and $v$ in a digraph $G = (V,E)$, as described, such that $vu \in E$ and $uv \not\in E$. By Lemma~\ref{lem:outneighbors}, any cover tour from $u$ to $v$ in $G$ revisits $u$. Let $T$ denote a shortest such cover tour, and let $T'$ denote the closed walk $T+vu$.

We begin by considering the cover tour which starts at $v$, first takes the edge $vu$, and continues along $T$ until the last seen vertex, $v_1$. The vertex $v$ appears only once in this cover tour, since $v$ appears only once in $T$. By Lemma~\ref{lem:outneighbors}, we have $vv_1 \in E$. Notice also that $v_1$ appears only once in $T$. Otherwise, the walk that starts at $u$, follows $T$ to the first copy of $v_1$ (visiting every vertex but $v$), and then follows $T$ from the last copy of $v_1$ back to $v$ is a cover tour that is shorter than $T$, which contradicts its minimality.

Now consider the cover tour contained in $T'$ which starts at $v_1$ and ends at the last seen vertex, $v_2$. As before, we have $v_1v_2 \in E$, and $v_2$ appears only once in $T$. We can continue in this way until we have chosen all of the vertices in $G$ which appear only once in $T$, for $T'$ is a closed walk containing all of the vertices of $G$, and each vertex which appears only once will be the last seen vertex on a walk from some $v_i$. Let $U = \{v, v_1, \ldots, v_{k}\}$ denote this set of chosen vertices, and let $W$ denote the closed walk that visits them in order. Note that $W$ visits the vertices in $U$ in opposite order to $T$. (See Figure~\ref{fig:closedwalk}.)

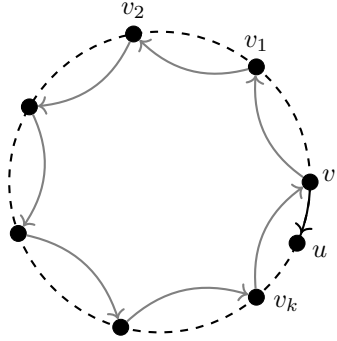
\begin{figure}
	\centering
	\begin{tikzpicture}
[every node/.style={circle, draw=black!100, fill=black!100, inner sep=0pt, minimum size=6pt}, scale=2]

	\draw[dashed, thick] (0,0) circle (1);
	
	\node (v) at (360:1) [label={[xshift=2.5mm,yshift=-1.5mm]$v$}] {};
	\node (v1) at (50:1) [label={$v_1$}] {};
	\node (v2) at (100:1) [label={$v_2$}] {};
	\node (v3) at (150:1) {};
	\node (v4) at (200:1) {};
	\node (v5) at (255:1) {};
	\node (vk) at (310:1) [label={[xshift=4mm,yshift=-4mm]$v_k$}] {};
	\node (u) at (336:1) [label={[xshift=3mm,yshift=-3.5mm]$u$}] {};

	\draw[->, gray!100, thick] (v) edge [bend left] (v1) {};
	\draw[->, gray!100, thick] (v1) edge [bend left] (v2) {};
	\draw[->, gray!100, thick] (v2) edge [bend left] (v3) {};
	\draw[->, gray!100, thick] (v3) edge [bend left] (v4) {};
	\draw[->, gray!100, thick] (v4) edge [bend left] (v5) {};
	\draw[->, gray!100, thick] (v5) edge [bend left] (vk) {};
	\draw[->, gray!100, thick] (vk) edge [bend left] (v) {};
	\draw[<-,thick] (340:1) arc (340:360:1);

	\end{tikzpicture}

	\caption{A minimum cover tour $T$ from $u$ to $v$ is depicted by a dashed line, moving in clockwise direction. The edge $vu$ is included to depict the closed walk $T' = T+vu$. The grey arrows indicate the closed walk $W$, which visits all vertices in $U$, in counterclockwise direction.}
	\label{fig:closedwalk}
\end{figure}

We note that any vertex $x$ in $V \setminus U$ has a copy in $T$, and the minimality of $T$ ensures that no two copies of $x$ appear in the same interval $(v_1, v)$, $(v, v_k)$, or $(v_{i+1}, v_{i})$ for $i \in \{1, \ldots, k-1\}$.
We will now show that $v_1v \in E$ and $v_{i+1}v_i \in E$ for each $i \in \{1, \ldots,k-1\}$. However, the minimality of $T$ then implies that there are no vertices between $v_1$ and $v$ in $T$; that is, $v_1v$ is an edge in $T$. Similarly, each edge $v_{i+1}v_i$ is in $T$. From this we derive our final contradiction, for $u$ appears at least twice in $T$ by assumption, and the two copies cannot both appear in the interval $(v_k, v)$.

We will find a cover tour in $G$ from $v_1$ to $v$, which does not revisit $v_1$. By Lemma~\ref{lem:outneighbors}, this implies that $v_1v \in E$. First, in $T'$ we label the $(v_2,v_1)$ interval $A$, the $(v_1,v)$ interval $B$, and the $(v,v_k)$ interval $C$. Since vertices in $V\setminus U$ appear multiple times on $T$, but only once in each interval, we use subscripts to differentiate copies. For example, $u_C$ indicates the vertex $u$ in the position on $T'$ in the interval $C$, as drawn in Figure~\ref{fig:closedwalk}. (We will only need labels for these three intervals.) Further, for any $x,y \in V$, with interval subscripts if necessary, we use the notation $x \overset{T'}{\rightarrow} y$ to indicate the walk from $x$ to $y$ along $T'$, and for $v_i,v_j$ in $U$ we use $v_i \overset{W}{\rightarrow} v_j$ to indicate the walk from $v_i$ to $v_j$ along $W$.

We now present a walk in two parts; the desired cover tour from $v_1$ to $v$ which does not revisit $v_1$ is a concatenation of these parts.
As a general template, we take Part (I) to be
\[ (v_1 \overset{W}{\rightarrow} v_k) \cup (v_k \overset{T'}{\rightarrow} v_2), \]
and Part (II) to be
\[ (v_2 \overset{W}{\rightarrow} v). \]
If there no vertices in $V \setminus U$ that appear only in the intervals $A$, $B$ or $C$ then the concatenation of Parts (I) and (II) yields the desired cover tour.
Otherwise, we modify Parts (I) and (II) depending on the intervals in which such vertices appear.

We label vertices in $V \setminus U$ which appear on $T'$ only in $A$, $B$, or $C$ as type $AB$, $BC$, or $AC$, depending on which two intervals they appear in. If a vertex appears in all three intervals, we may ignore one copy and label it arbitrarily. 
Part (I) of our walk starts at $v_1$ and ends at $v_2$, and it covers all vertices of type $BC$. If there are no $BC$ vertices, then Part (I) is as previously described.
Otherwise, let $x$ be the last $BC$ vertex in the interval $B$. In this case, Part (I) is 
\[ (v_1 \overset{T'}{\rightarrow} x_B) \cup (x_C \overset{T'}{\rightarrow} v_2) .  \]
Since $x$ was the last $BC$ vertex in $B$, Part (I) of the walk has now covered all $BC$ vertices.

Part (II) of our walk covers all $AB$ and $AC$ vertices. If there are none, then Part (II) is as previously described.
Otherwise, let $y$ be the last $AB/AC$ vertex in $A$. If $y$ is $AB$, then Part (II) is
\[ (v_2 \overset{T'}{\rightarrow} y_A) \cup (y_B \overset{T'}{\rightarrow} v) .  \]
If $y$ is $AC$, then Part (II) is
\[ (v_2 \overset{T'}{\rightarrow} y_A) \cup (y_C \overset{T'}{\rightarrow} v_k) \cup (v_k \overset{W}{\rightarrow} v).  \]
Since $y$ was the last unseen vertex in $A$, Part (II) of the walk has now covered all $AB$ and $AC$ vertices. Concatenating Part (I) and Part (II) yields a cover tour from $v_1$ to $v$ which does not revisit $v_1$, and therefore $vv_1 \in E$ (and $T$). A similar argument shows that $v_{i+1} v_i \in E(G)$ (and $T$) for all $i \in \{1, \ldots, k-1\}$, which completes the proof.
\end{proof}

\section*{Acknowledgements}
This work is the result of a collaboration at the 2021 Virtual Masamu Advanced Study Institute (MASI). Paul Horn was partially supported by Simons Collaboration grant 525039. We also thank Daniel Velleman for helpful comments.

\end{document}